\newtheorem{theorem}{Theorem}
\newtheorem{acknowledgement}{Acknowledgement}
\newtheorem{corollary}{Corollary}
\newtheorem{definition}{Definition}
\newtheorem{example}{Example}
\newtheorem{notation}{Notation}
\newtheorem{proposition}{Proposition}
\numberwithin{theorem}{section}
\numberwithin{example}{section}
\numberwithin{remark}{section}
\numberwithin{proposition}{section}
\numberwithin{lemma}{section}
\numberwithin{corollary}{section}
\numberwithin{definition}{section}
\begin{document}

\title{On the Notion of a Generalized Mapping on Multiset Spaces}
\author{Athar Kharal\thanks{%
atharkharal@gmail.com} \\
Department of Mathematics, Air University, Pakistan \and Mansoor H. Alshehri%
\thanks{%
Corresponding Author, mhalshehri@ksu.edu.sa}, Nasser Bin Turki\thanks{%
nassert@ksu.edu.sa}, Faisal Z. Duraihem\thanks{%
faldureiham@ksu.edu.sa} \\
Department of Mathematics, College of Science, \\
King Saud University, P.O. Box 2455, Riyadh 11451, Saudi Arabia}
\date{ }
\maketitle

\begin{abstract}
A sufficiently generalized concept of mappings on multisets has been
introduced, thus resolving a long standing obstacle in structural study of
multiset processing. It has been shown that the mapping defined herein can
model a vast array of already defined mappings within the domain of
Theoretical Computer Science as special cases and also it handels diverse
situations in multiset rewriting transformations. Specifically, this paper
unifies and generalizes the works of Parikh \cite{parikh66} (1966), Hickman 
\cite{hickman80} (1980), Khomenko \cite{Khomenko03} (2003) and Nazmul \cite%
{13namul} (2013).\bigskip 

\textbf{Kewords:} Multisets mapping; Multiset process modeling; Parikh
mapping; Multiset rewriting, Hickman mapping
\end{abstract}

\section{\protect\bigskip \textbf{Introduction}}

Index object in Pandas, the Data Science library of Python language, is a
multiset. Multisets are also important for analysis of various computer
algorithms \cite{dramn20a,dramn20b,luo20corr,nam20hook}. Although in
mathematics world, one can trace the origins of modern interest in multisets
as far as the great mathematician Richard Dedekind (d. 1916), but it is only
with the recent advent of Computer Science and allied fields that a renewed
interest is being witnessed \cite{feli20genMST}. Today in search of ever
more generalization, multisets have become the dominant data structures of
contemporary Computer Science\cite{wang19stanley}, Cybernetics \cite%
{burgin92}, Information Science, Combinatorics, and unconventional
computational paradigms like membrane \cite{paun06} and DNA computing, and
Petri Nets \cite{Khomenko03}. A set is a well defined collection of distinct
objects. As weakening the condition of well definedness gave birth to Fuzzy
Sets, weakening of the condition of `distinctness' produces the notion of
multisets. Thus multisets are sets in which repetition of elements is
significant. Both fuzzy sets and multisets are generalizations of classical
sets. Examples of multisets abound: water molecule $H_{2}O$ is a multiset as 
$H_{2}O=\left\{ H,H,O\right\} =\left\{ 2/H,1/O\right\} ,$ the prime
factorization of integers $n>0$ is a multiset whose elements are primes.
Every monic polynomial $f(x)$ over the complex numbers corresponds in a
natural way to the multiset of its roots. Zeros and poles of meromorphic
functions, invariants of matrices in a canonical form are multisets. Again
words in a language are multisets on the set of alphabets $\sum ,$ symbols
in a membrane are multisets over the set of alphabets and markings of a
Petri net are multisets over the set of places. Thus naturally multisets
find numerous applications in diverse fields: Database Management Systems,
Cryptography, Membrane Computing, Rewriting Systems, Abstract Chemical
Machines and Neural Networks etc \cite{gil19grf,satya20soft,zhang20corr}.

Despite a historically prolonged presence of multisets in Mathematics and
Computer Science, research on the multiset theory has not yet gained ground
and is still in its infant stages. The research shows a strong analogy in
the behavior of multisets and sets. Singh \cite{singh07} rightly pointed out
that so far all mappings defined for multisets turn into mappings between
the root sets and thus become ordinary mappings of insignificant
consequences. Since the case in consideration has a wider generality
(compared to that of sets), the results obtained for multisets are
technically more complicated and should be more general ones. Main obstacle,
in a full fledged research, has been the non-availability of a sufficiently
generalized notion of mapping between arbitrary collections of multisets.

This paper addresses the problem of extending a mapping defined between the
root sets of two multiset spaces to the multiset spaces themselves.
Organization of the paper is as follows: Section \ref{sec_RltdWrk} surveys
the related literature and locates the present work in its context. Section %
\ref{sec_Prelim} collects necessary definitions and presents some new
requisite results. Section \ref{sec_maps} is first of the two main sections
of this work. It formally introduces the new notion of mapping on multiset
spaces. This section also presents many important results, properties and
insights concerning the new mapping. Section \ref{sec_Interlinks} is the
other main section which demonstrates the relevance and interconnections of
the notion presented herein with other established notions of the field e.g.
Parikh \cite{parikh66}, Hickman \cite{hickman80}, Khomenko \cite{Khomenko03}
and Nazmul \cite{13namul} mappings also with cardinality, similarity and
distance measures on multisets. It shows how the mapping presented herein
unifies and generalizes various notions. Section \ref{sec_finalRems}
concludes the paper.

\section{\protect\bigskip \textbf{Related Work}\label{sec_RltdWrk}}

Parikh \cite{parikh66} introduced a mapping which interlinked words in a
language with arithmetical vectors. Despite the fact that Parikh mappings
find many applications \cite{amir03}, this notion has been found deficient
in many aspects by the subsequent researchers \cite{Karhumaki80,Mateescu01}.

Hickman also introduced a notion of mapping on multisets in \cite{hickman80}%
. His definition (see Definition \ref{def_hickman} later in this paper) is
somewhat restrictive definition of a multiset mapping, for multisets are
supposed to be richer objects in some sense than ordinary sets, and one
might expect that this extra richness would be reflected in the definition
of a multiset mapping, but the definition assumes identical multiplicity
limits.

Manjunath and John \cite{manjunath06} have done some preliminary work on
multiset relations. They defined multiset as a sub multiset of the
generalized Cartesian product. Continuing with same line of thought Girish
and John introduced a notion of functions in multiset context in \cite%
{mst09gir}. Girish and John defined functions as a subcollection of
Cartesian product of two multisets. In this specific context i.e. function
as subset of multiset relation, they have obtained results involving
different types of functions. It must be noted that their approach does not
consider a function defined on the root sets and its extension to the
multiset spaces induced by the root sets.

Singh \cite{singh07} considers ordinary mappings between the roots sets and
multiplicities are left unaltered. This thus makes the results for multisets
as exact copies of the classical results of set theory. Whereas, considering
multisets a generalization of sets, one naturally expects to see divergences.

Recently Nazmul \textit{et al}. have introduced yet another notion of
mapping on multisets in \cite{13namul}. This definition assumes that
multiplicities of domain and range multiset spaces are same, in fact
identical.

Our work encompasses the works of Parikh, Hickman and Nazmul as special
cases, while at the same time, being distinct from the approach adopted by
Girish and John in \cite{mst09gir}. We consider mappings between root sets 
\textit{extended} to arbitray multiset spaces.

\section{\textbf{Preliminaries\label{sec_Prelim}}\protect\bigskip}

In this section requisite definitions, notations and some other results have
been collected. Interested reader may find further material in \cite%
{mst09gir,mst01jen} and in the references therein.

A well defined collection of elements containing duplicates is called a
multiset. Formally, if $X$ is a set of elements, a multiset $M$ drawn from
the set $X$ is represented by a count function $C_{M}$ defined as $%
C_{M}:X\rightarrow N$ where $N$ represents the set of non negative integers.
For each $x\in X,C_{M}(x)$ is the characteristic value of $x$ in $M$ and
indicates the number of occurrences of the element $x$ in $M$. A multiset $M$
is a set if $C_{M}(x)\leq 0$ or $1$ $\forall $ $x\in X.$ The multiset space $%
X^{m}$ is the set of all multisets whose elements are in $X$ such that no
element in the multiset occurs more than $m$ times; more formally, if $%
X=\{x_{1},x_{2},...,x_{k}\}$ then $X^{m}=\{%
\{m_{1}/x_{1},m_{2}/x_{2},...,m_{k}/x_{k}\}$~$|$~$i=1,2,...,k$; $m_{i}\in
\{0,1,2,...,m\}\}$. The set $X^{\infty }$ is the set of all multisets over a
domain $X$ such that there is no limit to the number of occurrences of an
element in a multiset.

\begin{notation}
$\left( 1\right) $A multiset may safely be identified as the same as its
count function. In the context of this paper, a multiset $A\in X^{m}$ and
its count function will be denoted by the same letter $A.$ Intended use
would be clear by context i.e. by $A$ we shall mean multiset and by $A\left(
x\right) $ we shall mean the count of $x$ in multiset $A.$

$\left( 2\right) $ In the sequel $m^{\ast }$ will denote the set $\left\{
0,1,2,...,m\right\} ,$ for any arbitrary $m\in \mathbb{N}.$
\end{notation}

Let $M$ and $N$ be two multisets drawn from a multiset space $X^{m}$. $M$ is
a sub multiset of $N~(M\subseteq N)$ if $M(x)\leq N(x)$ for all $x\in X$. $M$
is a proper sub multiset of $N$ $(M\subset N)$ if $M(x)\leq N(x)$ $\forall $ 
$x\in X$ and there exists at least one $x\in X$ such that $M(x)<N(x)$. The
union (respectively, intersection, difference) of two multisets $M$ and $N$
drawn from a set $X$ is a multiset $P$ denoted by $M\cup N$ (resp., $M\cap
N,~M-N$) such that $\forall ~x\in X,$ $\left( M\cup N\right) (x)=\max
\{M(x),N(x)\}~\left( \left( M\cap N\right) (x)=\min \{M(x),N(x)\},~~\left(
M-N\right) \left( x\right) =\max \left\{ 0,M\left( x\right) -N\left(
x\right) \right\} \right) $. The complement $M^{c}$ of $M$ in $X^{m}$ is
given as $M^{c}\left( x\right) =m-M\left( x\right) ~\forall x\in X$.

We now define the notion of constant multiset and observe that the notion of
empty multiset, as defined by Jena \textit{et.al.} (Definition 0.1(v) \cite%
{mst01jen}), is a special kind of constant multiset.

\begin{definition}
A constant multiset $\widetilde{t}$ in $X^{m}$ is defined as $\widetilde{t}%
\left( x\right) =t~\forall x\in X$ where $t$ is some integer such that $%
0\leq t\leq m.$ Following special constant multisets seem to be interesting:

$\left( i\right) $ Multiset $\widetilde{0}$, defined as $\widetilde{0}\left(
x\right) =0~\forall x\in X.$ This is empty multiset \cite{mst01jen}.

$\left( ii\right) $ Multiset $\widetilde{1}$, defined as $\widetilde{1}%
\left( x\right) =1~\forall x\in X.$ Clearly $\widetilde{1}$ is the ordinary
set $X$.

$\left( iii\right) $ Multiset $\widetilde{m}$, defined as $\widetilde{m}%
=m~\forall x\in X.$ This is called absolute multiset in $X^{m}.$
\end{definition}

For a few basic properties of multiset union and intersection, we refer to
Theorem 1.1 \cite{mst01jen}. Moreover, we have some more properties:

\begin{theorem}
Let $M,N\in X^{m}$ then, we have:%
\begin{equation*}
\begin{tabular}{lll}
$\left( 1\right) ~~~M\cup \widetilde{0}=M.$ & ~~~~~ & $\left( 11\right) ~~$ $%
M\cap \widetilde{0}=\widetilde{0}.$ \\ 
$\left( 2\right) $~$~~M\cup \widetilde{m}=\widetilde{m}.$ & ~~~~~ & $\left(
12\right) $~$~~M\cap \widetilde{m}=M.$ \\ 
$\left( 3\right) $~$~~M\cup M=M.$ & ~~~~~ & $\left( 13\right) $~$~~M\cap
M=M. $ \\ 
$\left( 4\right) $~$~~M\cup \left( N\cup P\right) =\left( M\cup N\right)
\cup P.$ & ~~~~~ & $\left( 14\right) $~$~~M\cap \left( N\cap P\right)
=\left( M\cap N\right) \cap P.$ \\ 
$\left( 5\right) $~$~~M\subseteq N\iff M\cup N=N.$ & ~~~~~ & $\left(
15\right) $~$~~M\subseteq N\iff M\cap N=M.$ \\ 
$\left( 6\right) ~~~\widetilde{0}^{c}=\widetilde{m}.$ &  & $\left( 16\right)
~~~\widetilde{m}^{c}=\widetilde{0}.$ \\ 
$\left( 7\right) ~~~M\subseteq N\iff N^{c}\subseteq M^{c}.$ &  & $\left(
17\right) ~~~M-N=M-\left( M\cap N\right) .$ \\ 
$\left( 8\right) ~~~M-N=\left( M\cup N\right) -N.$ &  & $\left( 18\right)
~~~\left( M-N\right) -P\subseteq M-\left( N\cup P\right) .$ \\ 
$\left( 9\right) ~~~\left( M-N\right) -P\subseteq \left( M-N\right) \cup
\left( M\cap P\right) .$ &  & $\left( 19\right) ~~~\left( M-P\right) \cup
\left( N-P\right) \subseteq \left( M\cup N\right) -P.$ \\ 
$\left( 10\right) ~~~M-\left( N\cup P\right) \subseteq \left( M-N\right)
\cap \left( M-P\right) .$ &  & 
\end{tabular}%
\end{equation*}
\end{theorem}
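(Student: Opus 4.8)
The plan is to verify each of the nineteen identities and inclusions by reducing everything to pointwise statements about non-negative integers, since all the multiset operations involved ($\cup$, $\cap$, $-$, $^{c}$, $\subseteq$) are defined pointwise. Fix an arbitrary $x\in X$ and write $a=M(x)$, $b=N(x)$, $c=P(x)$; each of these lies in $m^{\ast }=\{0,1,\dots ,m\}$. Then $\left( M\cup N\right) (x)=\max\{a,b\}$, $\left( M\cap N\right) (x)=\min\{a,b\}$, $\left( M-N\right) (x)=\max\{0,a-b\}$, and $M^{c}(x)=m-a$, so every claimed relation becomes an elementary assertion about $\max$, $\min$, truncated subtraction, and the order relation $\le$ on integers. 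Since $\widetilde{0}(x)=0$, $\widetilde{m}(x)=m$, and $a\le m$ always, items $\left(1\right)$--$\left(3\right)$, $\left(6\right)$, $\left(11\right)$--$\left(13\right)$, $\left(16\right)$ are immediate from $\max\{a,0\}=a$, $\max\{a,m\}=m$, $\min\{a,0\}=0$, $\min\{a,m\}=a$, $m-0=m$, $m-m=0$, $\max\{a,a\}=\min\{a,a\}=a$.

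Next I would dispatch the associativity statements $\left(4\right)$ and $\left(14\right)$ from associativity of binary $\max$ and $\min$ on integers; the idempotence-flavoured absorption characterisations $\left(5\right)$ and $\left(15\right)$ from the equivalences $a\le b\iff\max\{a,b\}=b\iff\min\{a,b\}=a$ (quantified over all $x$, recalling that $M\subseteq N$ means $a\le b$ for every $x$); and $\left(7\right)$ from $a\le b\iff m-b\le m-a$. For $\left(8\right)$ and $\left(17\right)$ one checks $\max\{0,\max\{a,b\}-b\}=\max\{0,a-b\}$ and $\max\{0,a-\min\{a,b\}\}=\max\{0,a-b\}$, which hold by a short case split on whether $a\le b$.

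The remaining items $\left(9\right)$, $\left(10\right)$, $\left(18\right)$, $\left(19\right)$ are genuine inclusions rather than equalities, so for each I would show the pointwise inequality LHS$(x)\le$ RHS$(x)$ for all $x$. For $\left(18\right)$: $\left(\left(M-N\right)-P\right)(x)=\max\{0,\max\{0,a-b\}-c\}=\max\{0,a-b-c\}$ (since subtracting $c\ge 0$ and then truncating at $0$ has the same effect whether or not we truncated at the inner step), while $\left(M-\left(N\cup P\right)\right)(x)=\max\{0,a-\max\{b,c\}\}=\max\{0,\min\{a-b,a-c\}\}$, and $\max\{0,a-b-c\}\le\max\{0,\min\{a-b,a-c\}\}$ follows because $a-b-c\le a-b$ and $a-b-c\le a-c$. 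Item $\left(10\right)$ is essentially $\left(18\right)$ combined with the obvious $M-N\subseteq\left(M-N\right)\cup\left(M-P\right)$ reasoning, or again directly: $\max\{0,a-\max\{b,c\}\}=\min\{\max\{0,a-b\},\max\{0,a-c\}\}$, which is even an equality. Item $\left(9\right)$ reduces to $\max\{0,a-b-c\}\le\max\{\max\{0,a-b\},\min\{a,c\}\}$, which holds since the left side is $\le\max\{0,a-b\}$. Item $\left(19\right)$ reduces to $\max\{\max\{0,a-c\},\max\{0,b-c\}\}\le\max\{0,\max\{a,b\}-c\}$, i.e.\ $\max\{0,\max\{a,b\}-c\}=\max\{\max\{0,a-c\},\max\{0,b-c\}\}$, again an equality, provable by the case split $c\le\min\{a,b\}$ / $\min\{a,b\}<c\le\max\{a,b\}$ / $c>\max\{a,b\}$.

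I do not anticipate a serious obstacle here: the whole theorem is a catalogue of pointwise facts about $\max$, $\min$, and truncated subtraction on $\{0,\dots,m\}$, and the only mild care needed is in the truncated-difference identities (items $\left(8\right),\left(9\right),\left(10\right),\left(17\right),\left(18\right),\left(19\right)$), where one should note the key simplification $\max\{0,\max\{0,a-b\}-c\}=\max\{0,a-b-c\}$ for $c\ge 0$ and then handle a two- or three-way case split on the relative sizes of $a,b,c$. The cleanest writeup fixes $x$ once at the start, introduces the abbreviations $a,b,c$, and then treats the nineteen items in the groupings above, remarking that $\subseteq$ and $=$ between multisets are decided coordinatewise so that pointwise verification suffices.
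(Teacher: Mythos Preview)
Your proposal is correct and follows essentially the same pointwise strategy as the paper: the paper only writes out items $(10)$, $(17)$, and $(18)$ by expanding the count functions and comparing the resulting $\max/\min$ expressions, declaring the rest ``similar,'' whereas you carry the same idea through systematically with the abbreviations $a,b,c$. Your additional observation that $(10)$ and $(19)$ are in fact equalities (not merely inclusions) is a small sharpening beyond what the paper states.
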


\begin{proof}
We only prove $\left( 10\right) ,\left( 17\right) $ and $\left( 18\right) ,$
remaining proofs are similar.

$\left( 10\right) ~\left( M-\left( N\cup P\right) \right) \left( x\right)
=\max \left\{ 0,M\left( x\right) -\left( N\cup P\right) \left( x\right)
\right\} =\max \left\{ 0,M\left( x\right) -\max \left( N\left( x\right)
,P\left( x\right) \right) \right\} \leq \min \left\{ \left( M\left( x\right)
-N\left( x\right) \right) ,\left( M\left( x\right) -P\left( x\right) \right)
\right\} =\allowbreak \left( \left( M-N\right) \cap \left( M-P\right)
\right) \left( x\right) .$

$\left( 17\right) ~\left( M-\left( M\cap N\right) \right) \left( x\right)
=\max \left\{ 0,M\left( x\right) -\left( M\cap N\right) \left( x\right)
\right\} =\max \left\{ 0,M\left( x\right) -\min \left( M\left( x\right)
,N\left( x\right) \right) \right\} =\max \left\{ 0,M\left( x\right) -N\left(
x\right) \right\} =\left( M-N\right) \left( x\right) .$

$\left( 18\right) ~\left( \left( M-N\right) -P\right) \left( x\right) =\max
\left\{ 0,\left( M-N\right) \left( x\right) -P\left( x\right) \right\} =\max
\left\{ 0,\max \left\{ 0,M\left( x\right) -N\left( x\right) \right\}
-P\left( x\right) \right\} \leq \max \left\{ 0,M\left( x\right) -\max \left(
N\left( x\right) ,P\left( x\right) \right) \right\} =\allowbreak \max
\left\{ 0,M\left( x\right) -\left( N\cup P\right) \left( x\right) \right\}
=\left( M-\left( N\cup P\right) \right) \left( x\right) .$
\end{proof}

Multi Set Theory (MST) is an extension of Classical Set Theory (CST). Thus
one naturally expects divergences between the new theory and its classical
counterpart. One such case in point, is the Law of Excluded Middle and Law
of Non-Contradiction. It is notable that the set theoretic forms of these
laws do not hold i.e. for $A\in X^{m},$ in general, we have: $A\cap
A^{c}\not=\widetilde{0}$ and $A\cup A^{c}\not=\widetilde{m}.$
Counterexamples supporting above statements may be seen by choosing $%
A=\left\{ 4/a,2/b,0/c,0/d,3/e\right\} \in \left\{ a,b,c,d,e\right\} ^{4}$and
calculating%
\begin{eqnarray*}
A\cap A^{c} &=&\left\{ 0/a,1/b,0/c,0/d,2/e\right\} \neq \widetilde{0}, \\
A\cup A^{c} &=&\left\{ 4/a,3/b,4/c,4/d,2/e\right\} \neq \widetilde{m},
\end{eqnarray*}

In classical sets we have two sets $A$ and $B$ disjoint, symbolically, $%
A\cap B=\phi $ if and only if $A\subseteq B^{c}.$ Since disjointedness does
not make sense in multisets, naturally one looks for some alternate
condition which can guarantee the subsethood in complement of the bigger
multiset. This thought is formalized below in the notion of $m$-coincidence
as:

\begin{definition}
Two multisets $A,B\in X^{m}$ are said to be $m$-coincident if we have $%
A\left( x\right) +B\left( x\right) >m$ for some $x\in X.$ We denote this as $%
AmB.$ If $A$ is not $m$-coincident then we write $A\NEG{m}B.$
\end{definition}

Some immediate consequences of the above definition are:

\begin{enumerate}
\item $A\NEG{m}B\iff A\subset B^{c}$

\item $A\subset B\Rightarrow A\NEG{m}B^{c}.$
\end{enumerate}

An interesting result regarding $m$-coincidence is given as Theorem \ref%
{th_AlgOfFns} $\left( 15\right) .$ For a detailed study of the same notion
we refer to \cite{KharalMTop}.

\section{\textbf{\protect\bigskip The Mappings}\label{sec_maps}}

In this section we intend to define a suitable notion of mapping on
multisets. By suitable notion we mean that the mapping should be generalized
enough to be defined between arbitrary choices of $X,Y,m,n$ and thus
arbitrary multiset spaces $X^{m}$ and $Y^{n}.$ Moreover a multiset carries
some resemblance with a fuzzy set, in the sense that both notions assign a
numerical value to the elements of some arbitrary set. That is why one
naturally expects the mapping on multiset to be somewhat similar to standard
mapping on fuzzy sets as defined by Zadeh in 1965. Difference between fuzzy
and multisets is that of positive integral values in a multisets. So for a
truly generalized notion of multiset mapping, one has to ensure order
preservation between the special sets $m^{\ast }$ and $n^{\ast }$ related to
multiset spaces $X^{m},$ $Y^{n},$ respectively. In fact, such a truly
generalized mapping is needed to model, for example, multiset rewriting
systems. We shall further discuss this point in subsequent discussion.

For this we first introduce the notion of order preserving maps and then use
these for defining multiset mappings. Recall that $m^{\ast }=\left\{
0,1,2,...,m\right\} .$ We shall call a mapping $p:m^{\ast }\rightarrow
n^{\ast }$ order preserving (briefly, OP), if it satisfies:\newline
$\left( op1\right) ~~~p\left( 0\right) =0$\newline
$\left( op2\right) ~~~p\left( m\right) =n$\newline
$\left( op3\right) ~~~p\left( i\right) \geq p\left( i-1\right) $

An OP map does not allow a crossing of arrows in a traditional mapping
diagram as shown in figure below:%
\begin{equation*}
\FRAME{itbpF}{3.3148in}{2.5495in}{0in}{}{}{opmapfig.gif}{\special{language
"Scientific Word";type "GRAPHIC";maintain-aspect-ratio TRUE;display
"USEDEF";valid_file "F";width 3.3148in;height 2.5495in;depth
0in;original-width 4.9796in;original-height 3.8233in;cropleft "0";croptop
"1";cropright "1";cropbottom "0";filename 'OpMapfig.GIF';file-properties
"XNPEU";}}
\end{equation*}%
Following are immediate observations from the definition of OP maps:

\begin{proposition}
\label{pr_OPrslts}If $p:m^{\ast }\rightarrow n^{\ast }$ is an OP map then

$\left( 1\right) $ $p$ is a constant map iff $n=0.$

$\left( 2\right) $ $p$ may be a surjective map, only if $n\leq m.$

$\left( 3\right) $ $p$ may be an injective map only if $n\geq m.$

$\left( 4\right) $ $p$ may be an bijective map only if $m=n.$

$\left( 5\right) $ If $m=n$ and $p$ is surjective, then $p$ is bijective.

$\left( 6\right) $ If $m=n$ and $p$ is injective, then $p$ is bijective.

$\left( 7\right) $ $p$ is bijective iff $p$ is identity map.

$\left( 8\right) ~p\left( \min \left( x,y\right) \right) =\min \left(
p\left( x\right) ,p\left( y\right) \right) $

$\left( 9\right) ~p\left( \max \left( x,y\right) \right) =\max \left(
p\left( x\right) ,p\left( y\right) \right) .$
\end{proposition}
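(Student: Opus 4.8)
The plan is to treat the nine parts in natural groups, using throughout only the finiteness of $m^{\ast }$ and $n^{\ast }$ (note $|m^{\ast }|=m+1$ and $|n^{\ast }|=n+1$) together with the three defining axioms $(op1)$--$(op3)$.

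First I would dispose of $(1)$: if $n=0$ then $n^{\ast }=\{0\}$, so $p$ has no choice but to be constantly $0$; conversely, if $p$ has constant value $c$, then $(op1)$ forces $c=p(0)=0$, and then $(op2)$ forces $n=p(m)=0$. Parts $(2)$ and $(3)$ are pure cardinality statements: a surjection $m^{\ast }\to n^{\ast }$ requires $m+1\geq n+1$, i.e.\ $n\leq m$, and an injection requires $m+1\leq n+1$, i.e.\ $n\geq m$. Then $(4)$ is immediate, a bijection being both injective and surjective; and $(5)$, $(6)$ follow from the elementary fact that once $m=n$ the map $p$ is a self-map of the finite set $m^{\ast }$, and a self-map of a finite set is bijective as soon as it is injective or surjective.

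The one part that needs a genuine argument is the forward direction of $(7)$, that a bijective OP map is the identity. Here I would first invoke $(4)$ to get $m=n$, so that $p$ is a self-map of $m^{\ast }$, and then combine injectivity with $(op3)$ to upgrade ``nondecreasing'' to ``strictly increasing''. An induction then finishes it: $p(0)=0$ by $(op1)$; assuming $p(i-1)=i-1$, strict increase gives $p(i)\geq i$, while the chain $p(i)<p(i+1)<\dots <p(m)=m$ contains $m-i$ strict steps, hence $p(i)\leq m-(m-i)=i$, so $p(i)=i$. The converse, that the identity is a (necessarily OP) bijection, is trivial. I expect this induction, and in particular remembering to derive strict monotonicity before running it, to be the only place requiring any care.

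Finally $(8)$ and $(9)$ rest on monotonicity alone. By the symmetry of $\min$ and $\max$ in their two arguments I would assume without loss of generality $x\leq y$; then $(op3)$, applied step by step from $x$ up to $y$, gives $p(x)\leq p(y)$, whence $\min (p(x),p(y))=p(x)=p(\min (x,y))$ and $\max (p(x),p(y))=p(y)=p(\max (x,y))$. No real obstacle is anticipated in this last step.
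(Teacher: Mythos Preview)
Your proof is correct. The paper itself offers no proof of this proposition, introducing it merely as ``immediate observations from the definition of OP maps'' and moving directly to the next definition; your argument therefore supplies the verification the paper leaves implicit. In particular, the only non-routine item---the forward direction of $(7)$, that a bijective OP map must be the identity---is handled cleanly by your strict-monotonicity-plus-induction argument, which is the natural way to pin down an increasing self-bijection of $\{0,1,\dots ,m\}$.
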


\begin{definition}
Let $X^{m},Y^{n}$ be multiset spaces with $m,n\in \mathbb{N}$ and $X,Y$,
arbitrary sets. Then a mapping $f=\left( u,p\right) :X^{m}\rightarrow Y^{n}$
is said to be multiset mapping, if the component mapping $u:X\rightarrow Y$
is an ordinary map and $p:m^{\ast }\rightarrow n^{\ast }$ is an OP map.
Furthermore a multiset map $f$ is said to be $u$-injective (resp., $u$%
-surjective, $u$-bijective, $p$-injective, $p$-surjective, $p$-bijective) if 
$u$ (resp., $p$) is an injective (resp., surjective, bijective) map. $f$ is
said to be injective (resp., surjective, bijective) if it is both $u$%
-injective (resp., $u$-surjective, $u$-bijective) and $p$-injective (resp., $%
p$-surjective, $p$-bijective).
\end{definition}

\begin{definition}
For a multiset mapping $f=\left( u,p\right) :X^{m}\rightarrow Y^{n}$, the
image and preimage of multisets $A\in X^{m},$ $M\in Y^{n}$ are given,
respectively, as 
\begin{equation*}
f\left( A\right) \left( y\right) =\left\{ 
\begin{array}{ccc}
p\left( \underset{x\in u^{-1}\left( y\right) }{\dbigvee }A\left( x\right)
\right) & if & u^{-1}\left( y\right) \neq \phi \\ 
&  &  \\ 
0 &  & otherwise%
\end{array}%
\right.
\end{equation*}%
\begin{equation*}
f^{-1}\left( M\right) \left( x\right) =\left\{ 
\begin{array}{ccc}
\dbigvee p^{-1}\left( M\left( u\left( x\right) \right) \right) & if & 
p^{-1}\left( M\left( u\left( x\right) \right) \right) \neq \phi \\ 
&  &  \\ 
0 &  & otherwise%
\end{array}%
\right.
\end{equation*}
\end{definition}

Our notion is general one and does not restrict the size of $Y$ and $n,$
thus a totally arbitrary choice of multiset spaces may be made to model a
vast number of situations. Following example illustrates the necessary
calculations involved in implementation of above definition.

\begin{example}
Let $X=\left\{ a,b,c,d\right\} $ and $Y=\left\{ s,t,x,y,z\right\} .$
Consider $f=\left( u,p\right) :X^{4}\rightarrow Y^{5}$ with%
\begin{equation*}
~~u\left( a\right) =y,u\left( b\right) =y,u\left( c\right) =z,u\left(
d\right) =s,~~~p\left( 0\right) =0,p\left( 1\right) =1,p\left( 2\right)
=5,p\left( 3\right) =5,p\left( 4\right) =5
\end{equation*}%
Choose%
\begin{equation*}
A=\left\{ 1/a,4/b,2/c,4/d\right\} \in X^{4},~~~M=\left\{
1/s,2/t,1/x,1/y,5/z\right\} \in Y^{5},
\end{equation*}%
Then calculations show%
\begin{eqnarray*}
f\left( A\right) \left( s\right) &=&p\left( \underset{x^{\prime }\in
u^{-1}\left( s\right) }{\dbigvee }A\left( x^{\prime }\right) \right)
=p\left( \dbigvee A\left( d\right) \right) =p\left( \dbigvee \left( 4\right)
\right) =p\left( 4\right) =5 \\
f\left( A\right) \left( t\right) &=&0\text{ as }u^{-1}\left( t\right) =\phi
\\
f\left( A\right) \left( x\right) &=&0\text{ as }u^{-1}\left( x\right) =\phi
\\
f\left( A\right) \left( y\right) &=&p\left( \underset{x^{\prime }\in
u^{-1}\left( y\right) }{\dbigvee }A\left( x^{\prime }\right) \right)
=p\left( \underset{x^{\prime }\in \left\{ a,b\right\} }{\dbigvee }A\left(
x^{\prime }\right) \right) =p\left( \dbigvee \left( A\left( a\right)
,A\left( b\right) \right) \right) =p\left( \dbigvee \left( 1,4\right)
\right) =p\left( 4\right) =5 \\
f\left( A\right) \left( z\right) &=&p\left( \underset{x^{\prime }\in
u^{-1}\left( z\right) }{\dbigvee }A\left( x^{\prime }\right) \right)
=p\left( \dbigvee A\left( c\right) \right) =p\left( \dbigvee \left( 2\right)
\right) =p\left( 2\right) =5
\end{eqnarray*}%
Hence $f\left( A\right) =\left\{ 5/s,0/t,0/x,5/y,5/z\right\} .$ Similarly
for the preimage of $M$ we have 
\begin{eqnarray*}
f^{-1}\left( M\right) \left( a\right) &=&\dbigvee p^{-1}\left( M\left(
u\left( a\right) \right) \right) =\dbigvee p^{-1}\left( M\left( y\right)
\right) =\dbigvee p^{-1}\left( 1\right) =1 \\
f^{-1}\left( M\right) \left( b\right) &=&\dbigvee p^{-1}\left( M\left(
u\left( b\right) \right) \right) =\dbigvee p^{-1}\left( M\left( y\right)
\right) =\dbigvee p^{-1}\left( 1\right) =1 \\
f^{-1}\left( M\right) \left( c\right) &=&\dbigvee p^{-1}\left( M\left(
u\left( c\right) \right) \right) =\dbigvee p^{-1}\left( M\left( z\right)
\right) =\dbigvee p^{-1}\left( 5\right) =\dbigvee \left( 2,3,4\right) =4 \\
f^{-1}\left( M\right) \left( d\right) &=&\dbigvee p^{-1}\left( M\left(
u\left( d\right) \right) \right) =\dbigvee p^{-1}\left( M\left( s\right)
\right) =\dbigvee p^{-1}\left( 1\right) =1 \\
&\Rightarrow &f^{-1}\left( M\right) =\left\{ 1/a,1/b,4/c,1/d\right\} .
\end{eqnarray*}
\end{example}

\begin{theorem}
\label{th_AlgOfFns}For a multimap $f=\left( u,p\right) :X^{m}\rightarrow
Y^{n}$ and $A,B\in X^{m},$ $M,N\in Y^{n},$ we have

$\left( 1\right) $ $f\left( \widetilde{0}\right) =\widetilde{0}$

$\left( 2\right) $ $f\left( \widetilde{m}\right) \subseteq \widetilde{n},$
equality hold if $f$ is $u$-injective

$\left( 3\right) $ $f\left( A\cup B\right) =f\left( A\right) \cup f\left(
B\right) $

$\left( 4\right) $ $f\left( A\cap B\right) \subseteq f\left( A\right) \cap
f\left( B\right) ,~$equality holds if $f$ is $u$-injective

$\left( 5\right) ~A\subseteq B\Rightarrow f\left( A\right) \subseteq f\left(
B\right) $

$\left( 6a\right) $ $\left( f\left( A\right) \right) ^{c}\subseteq f\left(
A^{c}\right) $ if $f$ is $u$-surjective $p$-bijective.

$\left( 6b\right) $ $f\left( A^{c}\right) \subseteq \left( f\left( A\right)
\right) ^{c}$ if $f$ is $u$-injective $p$-bijective. Equality holds if $f$
is bijective.

$\left( 7\right) $ $\widetilde{0}\subseteq f^{-1}\left( \widetilde{0}\right) 
$

$\left( 8\right) $ $\widetilde{m}=f^{-1}\left( \widetilde{n}\right) $

$\left( 9\right) ~f^{-1}\left( M\cup N\right) \subseteq f^{-1}\left(
M\right) \cup f^{-1}\left( N\right) ,$ equality holds if $f$ is $p$%
-surjective

$\left( 10\right) $ $f^{-1}\left( M\right) \cap f^{-1}\left( N\right)
\subseteq f^{-1}\left( M\cap N\right) ,$ equality holds if $f$ is $p$%
-surjective

$\left( 11\right) $ $M\subseteq N\Rightarrow f^{-1}\left( M\right) \subseteq
f^{-1}\left( N\right) $ if $f$ is $p$-surjective

$\left( 12\right) ~f^{-1}\left( M^{c}\right) =\left( f^{-1}\left( M\right)
\right) ^{c}$ if $f$ is $p$-bijective.

$\left( 13\right) ~A\subseteq f^{-1}\left( f\left( A\right) \right) ,$
equality holds if $f$ is $u$-injective $p$-bijective.

$\left( 14\right) ~f\left( f^{-1}\left( M\right) \right) \subseteq M,$
equality holds if $f$ is surjective

$\left( 15\right) $ $f\left( A\right) \NEG{m}f\left( B\right) \Rightarrow A%
\NEG{m}B$ if $f$ is $p$-bijective. Bi-implication holds if $f$ is $u$%
-injective, $p$-bijective.
\end{theorem}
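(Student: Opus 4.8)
The plan is to exploit how restrictive the hypothesis ``$f$ is $p$-bijective'' really is. By Proposition~\ref{pr_OPrslts}(4) a bijective OP map can exist only when $m=n$, and then by Proposition~\ref{pr_OPrslts}(7) the map $p$ must be the identity on $m^{\ast}=n^{\ast}$. So throughout the argument I may replace $p$ by the identity and treat both spaces as carrying the common bound $m=n$; in particular the lone symbol $m$ in the statement is legitimate, since $f(A),f(B)\in Y^{n}=Y^{m}$, and the image formula reduces to $f(A)(y)=\bigvee_{x^{\prime}\in u^{-1}(y)}A(x^{\prime})$ when $u^{-1}(y)\neq\phi$ and to $f(A)(y)=0$ otherwise (the join being a genuine maximum, as $A$ is $m^{\ast}$-valued).

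For the asserted implication I would argue by contraposition: assume $AmB$, so there is $x\in X$ with $A(x)+B(x)>m$, and deduce $f(A)mf(B)$. Put $y:=u(x)$; then $x\in u^{-1}(y)\neq\phi$, hence $f(A)(y)=\bigvee_{x^{\prime}\in u^{-1}(y)}A(x^{\prime})\geq A(x)$ and, likewise, $f(B)(y)\geq B(x)$. Adding, $f(A)(y)+f(B)(y)\geq A(x)+B(x)>m$, which is exactly $f(A)mf(B)$. (One could instead route this through the equivalence $A\NEG{m}B\iff A\subseteq B^{c}$ noted right after the definition of $m$-coincidence, but the direct estimate is shorter.)

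For the bi-implication I would add the hypothesis that $f$ is $u$-injective and prove the reverse implication $f(A)mf(B)\Rightarrow AmB$; together with the previous paragraph this yields the equivalence $AmB\iff f(A)mf(B)$, i.e. the stated bi-implication $A\NEG{m}B\iff f(A)\NEG{m}f(B)$. So suppose $f(A)(y)+f(B)(y)>m$ for some $y\in Y$. Then $u^{-1}(y)\neq\phi$, since otherwise the left-hand side is $0$, impossible for $m\geq1$ (and $m=0$ is vacuous, as $X^{0}=\{\widetilde{0}\}$ admits no $m$-coincident pair). Because $u$ is injective, $u^{-1}(y)$ is a singleton, say $\{x\}$; then the joins defining $f(A)(y)$ and $f(B)(y)$ run over the single index $x$, so $f(A)(y)=A(x)$ and $f(B)(y)=B(x)$, whence $A(x)+B(x)>m$ and $AmB$.

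I do not expect a deep obstacle: the argument is almost entirely bookkeeping. The two points demanding care are (i) noticing at the outset that $p$-bijectivity collapses $p$ to the identity and forces $m=n$, which is what makes the statement well-typed in the first place; and (ii) pinpointing where $u$-injectivity is actually consumed --- without it the two joins realizing $f(A)(y)$ and $f(B)(y)$ may be attained at distinct preimages of $y$, and the converse genuinely fails; a clean witness is $u$ constant onto a single point $y$ together with $A=\{m/x_{1},0/x_{2}\}$ and $B=\{0/x_{1},m/x_{2}\}$, for which $A\NEG{m}B$ while $f(A)(y)+f(B)(y)=2m>m$. The final write-up should keep the degenerate cases $u^{-1}(y)=\phi$ and $m=0$ explicit rather than brushing them aside, and should make the role of each hypothesis visible.
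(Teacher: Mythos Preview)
Your argument for part~(15) is correct. The preliminary observation---that $p$-bijectivity forces $m=n$ and $p$ to be the identity on $m^{\ast}$ via Proposition~\ref{pr_OPrslts}(4) and~(7)---is exactly what makes the notation $f(A)\NEG{m}f(B)$ well-typed and collapses the image formula to a plain supremum over $u^{-1}(y)$. Both contrapositive computations are sound, the handling of the degenerate cases $u^{-1}(y)=\phi$ and $m=0$ is adequate, and the counterexample you sketch (constant $u$, $A=\{m/x_{1},0/x_{2}\}$, $B=\{0/x_{1},m/x_{2}\}$) cleanly shows that $u$-injectivity cannot be dropped from the converse.

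As for comparison: the paper does not actually write out a proof of~(15). Its proof block treats only items (4), (5), (6a), (9), (11), (13), (14) and dismisses the rest as ``similar,'' so there is no paper argument for~(15) to set yours against; your treatment is more explicit than what the paper supplies. One caution: your proposal addresses only part~(15) of a multi-part theorem. If you are responsible for the whole statement you still owe at least a line for each of the remaining items, though most are indeed routine once the image/preimage formulas and Proposition~\ref{pr_OPrslts}(8),(9) are available.
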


\begin{proof}
We only prove $\left( 4,5\right) $ and $\left( 9\right) ,$ other proofs are
similar.

$\left( 4\right) $ We only consider the non-trivial case when $u^{-1}\left(
y\right) \neq \phi .$ So $f\left( A\cap B\right) \left( y\right)
=\allowbreak p\left( \underset{x\in u^{-1}\left( y\right) }{\dbigvee }\min
\left( A\left( x\right) ,B\left( x\right) \right) \right) \leq p\left( \min
\left( \underset{x\in u^{-1}\left( y\right) }{\dbigvee }\left( A\left(
x\right) ,B\left( x\right) \right) \right) \right) \allowbreak =\min \left(
p\left( \underset{x\in u^{-1}\left( y\right) }{\dbigvee }\left( A\left(
x\right) ,B\left( x\right) \right) \right) \right) =\allowbreak \min \left(
p\left( \underset{x\in u^{-1}\left( y\right) }{\dbigvee }A\left( x\right)
\right) ,p\left( \underset{x\in u^{-1}\left( y\right) }{\dbigvee }B\left(
x\right) \right) \right) \allowbreak =\left( f\left( A\right) \cap f\left(
B\right) \right) \left( y\right) .$ Hence we have $f\left( A\cap B\right)
\subseteq f\left( A\right) \cap f\left( B\right) .$\bigskip

$\left( 5\right) $ $f\left( A\right) =p\left( \underset{x\in u^{-1}\left(
y\right) }{\dbigvee }A\left( x\right) \right) \leq p\left( \underset{x\in
u^{-1}\left( y\right) }{\dbigvee }B\left( x\right) \right) =f\left( B\right)
,$ since $A\left( x\right) \leq B\left( x\right) $ $\forall x\in X.$\bigskip

$\left( 6a\right) $ For each $y\in Y,$ if $f^{-1}\left( y\right) $ is not
empty, then%
\begin{equation*}
f\left( A^{c}\right) \left( y\right) =p\left( \underset{x\in u^{-1}\left(
y\right) }{\dbigvee }A^{c}\left( x\right) \right) =p\left( \underset{x\in
u^{-1}\left( y\right) }{\dbigvee }\left( m-A\left( x\right) \right) \right)
=n-p\left( \underset{x\in u^{-1}\left( y\right) }{\dbigvee }A\left( x\right)
\right)
\end{equation*}%
and%
\begin{eqnarray*}
\left( f\left( A\right) \right) ^{c}\left( y\right) &=&\dbigvee p^{-1}\left(
n-M\left( u\left( x\right) \right) \right) =\dbigvee \left( p^{-1}\left(
n\right) -p^{-1}\left( M\left( u\left( x\right) \right) \right) \right) \\
&=&p^{-1}\left( n\right) -\dbigvee p^{-1}\left( M\left( u\left( x\right)
\right) \right) =m-\dbigvee p^{-1}\left( M\left( u\left( x\right) \right)
\right)
\end{eqnarray*}%
therefore%
\begin{equation*}
f\left( A^{c}\right) \left( y\right) \geq \left( f\left( A\right) \right)
^{c}\left( y\right)
\end{equation*}%
\bigskip

$\left( 9\right) ~f^{-1}\left( M\cup N\right) \left( x\right) =\dbigvee
p^{-1}\left( \max \left( M\left( u\left( .\right) \right) ,N\left( u\left(
.\right) \right) \right) \left( x\right) \right) \allowbreak =\dbigvee
p^{-1}\left( \max \left( M\left( u\left( x\right) \right) ,N\left( u\left(
x\right) \right) \right) \right) \leq \allowbreak p^{-1}\left( \max M\left(
u\left( x\right) \right) \right) \dbigvee p^{-1}\left( \max N\left( u\left(
x\right) \right) \right) =f^{-1}\left( M\right) \left( x\right) \dbigvee
f^{-1}\left( N\right) \left( x\right) \allowbreak =\left( f^{-1}\left(
M\right) \cup f^{-1}\left( N\right) \right) \left( x\right) .$\bigskip

$\left( 11\right) $ 
\begin{equation*}
f^{-1}\left( M\right) \left( x\right) =\dbigvee p^{-1}\left( M\left( u\left(
x\right) \right) \right)
\end{equation*}%
since~$M\subseteq N,~\dbigvee p^{-1}\left( M\left( u\left( x\right) \right)
\right) \leq \dbigvee p^{-1}\left( N\left( u\left( x\right) \right) \right) $
for all $x\in X.$ Hence $f^{-1}\left( M\right) \subseteq f^{-1}\left(
N\right) .$\bigskip

$\left( 13\right) $ $f^{-1}\left( f\left( A\right) \right) \left( y\right)
=\dbigvee p^{-1}\left( f\left( A\right) \left( u\left( x\right) \right)
\right) =\dbigvee p^{-1}\left( p\left( \underset{z\in u^{-1}\left( u\left(
x\right) \right) }{\dbigvee }f^{-1}\left( M\right) \left( z\right) \right)
\right) \geq A\left( x\right) $\bigskip

$\left( 14\right) $ If $u^{-1}\left( y\right) $ is not empty,%
\begin{equation*}
f\left( f^{-1}\left( M\right) \right) \left( y\right) =p\left( \underset{%
x\in u^{-1}\left( y\right) }{\dbigvee }f^{-1}\left( M\right) \left( x\right)
\right) =p\left( \underset{x\in u^{-1}\left( y\right) }{\dbigvee }\left(
\dbigvee p^{-1}\left( M\left( u\left( x\right) \right) \right) \right)
\right) \leq M\left( y\right) .
\end{equation*}
\end{proof}

Suitable counterexamples may be constructed to show the direction of
inclusions.

\begin{example}
We show that $\left( 4\right) ,\left( 6b\right) ,\left( 13\right) $ and $%
\left( 14\right) ,$ are in general irreversible. For $\left( 4\right)
,\left( 13\right) $ and $\left( 14\right) $ consider $f=\left( u,p\right)
:\left\{ a,b,c,d\right\} ^{5}\rightarrow \left\{ x,y,z\right\} ^{7}$ where%
\begin{equation*}
u\left( a\right) =z,u\left( b\right) =z,u\left( c\right) =x,u\left( d\right)
=y,~~~~~~p\left( 0\right) =0,p\left( 1\right) =0,p\left( 2\right) =4,p\left(
3\right) =5,p\left( 4\right) =5,p\left( 5\right) =7
\end{equation*}%
and choose%
\begin{equation*}
A=\left\{ 4/a,0/b,0/c,4/d\right\} ,~~B=\left\{ 1/a,2/b,4/c,4/d\right\}
,~~M=\left\{ 1/x,2/y,6/z\right\}
\end{equation*}%
Then the calculations show%
\begin{eqnarray*}
f\left( A\right) \cap f\left( B\right) &=&\{0/x,5/y,4/z\}\not\subseteq
\{0/x,5/y,0/z\}=f\left( A\cap B\right) \\
f^{-1}\left( f\left( A\right) \right) &=&\{4/a,4/b,1/c,4/d\}\not\subseteq
\{4/a,0/b,0/c,4/d\}=A \\
M &=&\{1/x,2/y,6/z\}\not\subseteq \{0/x,0/y,0/z\}=f\left( f^{-1}\left(
M\right) \right)
\end{eqnarray*}%
Again for $\left( 6b\right) $ set $f=\left( u,p\right) :\left\{
a,b,c,d\right\} ^{7}\rightarrow \left\{ s,t,x,y,z\right\} ^{7}$ to be a $u$%
-injective $p$-bijective map by choosing%
\begin{equation*}
u\left( a\right) =s,u\left( b\right) =z,u\left( c\right) =x,u\left( d\right)
=y,~~~\text{and }p:m^{\ast }\rightarrow n^{\ast }\text{ is a bijective
(hence identity) map.}
\end{equation*}%
Then for choosing $A=\left\{ 3/a,2/b,5/c,1/d\right\} $ we have 
\begin{equation*}
\left( f\left( A\right) \right) ^{c}=\left\{ 4/s,7/t,2/x,6/y,5/z\right\}
\not\subseteq \left\{ 4/s,0/t,2/x,6/y,5/z\right\} =f\left( A^{c}\right) .
\end{equation*}
\end{example}

For any new generalization one naturally expects divergences from the
previous classical theory. For if there are no divergences, generally such
generalizations do not prove fruitful. There are many statements in Theorem %
\ref{th_AlgOfFns} which are either diverging or even reversing the earlier
classical results. Statements $\left( 6b-7,9-11,13-14\right) $ are unusual
in this sense. We see these divergences as potential budding sites for new
and richer developments of the theory of multiset computation.

\section{\protect\bigskip \textbf{Interconnections }\label{sec_Interlinks}}

This section interconnects the multiset mapping presented herein with other
important notions. For easy referencing different types of mappings shall be
referred by the names of their respective authors e.g. Parikh, Nazmul,
Khomenko and Kharal mappings.

Kharal mappings bear many seminal links with other notions of multiset
processing: $\left( 1\right) $ Kharal maps possess enough modeling
capability to suitably model other important notions of mappings like
representing Parikh and Khomenko mappings. $\left( 2\right) $ Kharal maps
are generalized enough to include other mapping notions as special cases.
Specifically the works of Hickman \cite{hickman80} and Nazmul \textit{et al}%
. \cite{13namul} are special cases of Kharal maps. $\left( 3\right) $ Kharal
maps nicely interact with some of the naive measures of pattern recognition
on multisets e.g. cardinality, distance and similarity.

\subsection{Kharal Representation of Parikh and Khomenko Mappings}

Parikh mappings (vectors) express properties of words of a context free
language as numerical properties of vectors yielding some fundamental
language-theoretic consequences. Parikh mapping is used in diverse areas of
applications for example in text finger printing \cite{amir03}. Certain
shortcomings in the notion of Parikh mapping have also been pointed out in
literature \cite{Karhumaki80,Mateescu01}. For example it is noted that much
information is lost in the transition from a word to a vector. That is why,
a sharpening of the Parikh mapping, where more information is preserved than
in the original Parikh mapping, was introduced in \cite{Mateescu01}.
Different generalizations of the notion of Parikh Mapping have also been
attempted \cite{Karhumaki80}.

In the following we establish the connection between Parikh mapping and
Kharal mapping based upon multiset processing as a common denominator of the
two proposals. The approach adopted is, as usual, to show how to define one
formalism in terms of other and vice a versa.

\begin{definition}
\cite{parikh66} Let $X=\left\{ t_{1},t_{2},...,t_{k}\right\} $ be a set with
the order given by subscripts. The Parikh mapping of a multiset $%
A:X\rightarrow \mathbb{N}$ is denoted by $\psi \left( A\right) $ and is
defined as%
\begin{equation*}
\psi \left( A\right) =\left( A\left( t_{1}\right) ,A\left( t_{2}\right)
,~...~,A\left( t_{k}\right) \right)
\end{equation*}
\end{definition}

For a multiset space $X^{m}$ set $Y^{n}$ such that $n=1$ and 
\begin{equation*}
Y=\dbigcup\limits_{t_{i}\in X}\left\{ \left( 0,0,\ldots ,A\left(
t_{i}\right) ,0,\ldots ,0\right) ~|~1\leq i\leq k\right\}
\end{equation*}%
then for $f:X^{m}\rightarrow Y^{n}$ define%
\begin{equation*}
u\left( t_{i}\right) =\left( 0,0,\ldots ,A\left( t_{i}\right) ,0,\ldots
,0\right) \text{ such that }A\left( t_{i}\right) \text{ is at the }i\text{%
-th place}
\end{equation*}%
and~ $p:m^{\ast }\rightarrow n^{\ast }=\left\{ 0,1\right\} $ an identity
mapping given as%
\begin{equation*}
p\left( x\right) =\left\{ 
\begin{tabular}{ll}
$0$ & if~ $x=0$ \\ 
&  \\ 
$1$ & otherwise%
\end{tabular}%
\right.
\end{equation*}%
Then 
\begin{equation*}
\psi \left( A\right) =\sum f\left( A\right)
\end{equation*}%
where sum is the usual vector sum of members of $Y.$ Observe that $f$ is $u$%
-injective $p$-bijective mapping. Also note that $Y$ already incorporates
all possible orders on $X:$ for different orders on $X,$ only the
assignments of $u$ are to be changed. Following example illustrates the
Kharal representation of a Parikh mapping. Note that many other
possibilities may also be handled by Kharal mappings amongst which Parikh
mapping is just one.

\begin{example}
$X=\left\{ a,b,c,d,e\right\} ,$ $m=5.$ Choose $A=\left[ 3/e,4/a,3/b,0/d,1/c%
\right] \in X^{5},$ where square brackets denote that $A$ is an ordered
multiset. Then its Parikh mapping is given as $\psi \left( A\right) =\left(
3,4,3,0,1\right) .$ Now we set%
\begin{eqnarray*}
Y &=&\{\left( 0,0,0,0,0\right) ,\left( 3,0,0,0,0\right) ,\left(
0,3,0,0,0\right) ,\left( 0,0,3,0,0\right) ,\left( 0,0,0,3,0\right) ,\left(
0,0,0,0,3\right) , \\
&&\left( 4,0,0,0,0\right) ,\left( 0,4,0,0,0\right) ,\left( 0,0,4,0,0\right)
,\left( 0,0,0,4,0\right) ,\left( 0,0,0,0,4\right) , \\
&&\left( 1,0,0,0,0\right) ,\left( 0,1,0,0,0\right) ,\left( 0,0,1,0,0\right)
,\left( 0,0,0,1,0\right) ,\left( 0,0,0,0,1\right) \}
\end{eqnarray*}%
where parenthesis denote ordered pairs. $f=\left( u,p\right)
:X^{5}\rightarrow Y^{1}$ is given as follows: Order of $A$ forces following
assignments for $u:$%
\begin{eqnarray*}
u\left( a\right) &=&\left( 0,4,0,0,0\right) ,~u\left( b\right) =\left(
0,0,3,0,0\right) ,~u\left( c\right) =\left( 0,0,0,0,1\right) ,~u\left(
d\right) =\left( 0,0,0,0,0\right) ,~u\left( e\right) =\left( 3,0,0,0,0\right)
\\
p\left( 0\right) &=&0,~~p\left( 1\right) =p\left( 2\right) =p\left( 3\right)
=p\left( 4\right) =p\left( 5\right) =1
\end{eqnarray*}%
Then the calculations give%
\begin{equation*}
f\left( A\right) =\left\{ 0/\left( 0,0,0,0,0\right) ,1/\left(
0,0,0,0,1\right) ,1/\left( 0,0,3,0,0\right) ,1/\left( 0,4,0,0,0\right)
,1/\left( 3,0,0,0,0\right) \right\}
\end{equation*}%
Clearly we have 
\begin{equation*}
\psi \left( A\right) =\left( 3,4,3,0,1\right) =\left( 0,0,0,0,0\right)
+\left( 0,0,0,0,1\right) +\left( 0,0,3,0,0\right) +\left( 0,4,0,0,0\right)
+\left( 3,0,0,0,0\right) =\sum f\left( A\right)
\end{equation*}
\end{example}

Another notion of mappings is defined and used in \cite{Khomenko03} by
Khomenko, is as follows:

\begin{definition}
\cite{Khomenko03} Let $A$ be a multiset over $X$ and $h:X\rightarrow Y$ is a
mapping. Then the image $h\left( A\right) $ is defined as%
\begin{equation*}
h\left( A\right) \left( y\right) =\dsum\limits_{x\in X\wedge h\left(
x\right) =y}A\left( x\right)
\end{equation*}
\end{definition}

Replacing $\dbigvee $ with $\sum $ and choosing $p:m^{\ast }\rightarrow
n^{\ast }$ to be an identity mapping, one immediately sees above definition
as a variation of Kharal mappings.

\subsection{\textbf{Hickman Mappings}}

Before considering Hickman's mapping we have:

\begin{theorem}
\label{th_forHickmanEqul}Let $f=\left( u,p\right) :X^{m}\rightarrow Y^{n}$
be a Kharal multiset mapping, $M\in X^{m}$ and $N=f\left( M\right) $. We have

$\left( 1\right) $ If $f$ is $p$-injective then $M\left( x\right) \leq
N\left( u\left( x\right) \right) .$

$\left( 2\right) $ If $\#X=\#Y,$ and $f$ is surjective then $M\left(
x\right) \geq N\left( u\left( x\right) \right) ,$ where \# denotes the
crdinality of a set.
\end{theorem}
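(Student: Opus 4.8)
\emph{Setup.} The plan is to reduce both assertions to two elementary facts about an OP map $p:m^{\ast }\rightarrow n^{\ast }$, namely that $p$-injectivity forces $p(i)\geq i$ for all $i\in m^{\ast }$, and that $p$-surjectivity forces $p(i)\leq i$ for all $i\in m^{\ast }$. First I would record that, writing $N=f(M)$ and fixing an arbitrary $x\in X$, we have $x\in u^{-1}(u(x))$, so this preimage is non-empty and the definition of the image gives
\begin{equation*}
N\left( u\left( x\right) \right) =p\left( \underset{x^{\prime }\in u^{-1}\left( u\left( x\right) \right) }{\dbigvee }M\left( x^{\prime }\right) \right) .
\end{equation*}
Set $V_{x}=\dbigvee_{x^{\prime }\in u^{-1}\left( u\left( x\right) \right) }M\left( x^{\prime }\right) $; this join is attained since $M$ takes values in the finite set $m^{\ast }$, and clearly $M(x)\leq V_{x}$. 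Also, iterating $(op3)$ shows $p$ is non-decreasing, hence $p(M(x))\leq p(V_{x})=N(u(x))$.

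\emph{Part (1).} Here I would prove the lemma that a $p$-injective OP map satisfies $p(i)\geq i$: injectivity together with $(op3)$ makes $p$ strictly increasing on the integers, so $p(i)\geq p(i-1)+1$, and induction from $p(0)=0$ yields $p(i)\geq i$ for every $i\in m^{\ast }$. Applying this with $i=M(x)$ gives $p(M(x))\geq M(x)$, and combining with the inequality $N(u(x))\geq p(M(x))$ from the setup yields $M(x)\leq N(u(x))$, as required.

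\emph{Part (2).} Since $f$ surjective means $f$ is both $u$-surjective and $p$-surjective, and since $\#X=\#Y$ with $X,Y$ finite, the surjection $u:X\rightarrow Y$ is a bijection; hence $u^{-1}(u(x))=\{x\}$, so $V_{x}=M(x)$ and $N(u(x))=p(M(x))$. It then remains to show the lemma that a $p$-surjective OP map satisfies $p(i)\leq i$. For this, let $i_{0}<i_{1}<\cdots <i_{n}$ be the smallest indices in $m^{\ast }$ at which the values $0,1,\ldots ,n$ are respectively attained (these exist by surjectivity, are strictly increasing because $p$ is non-decreasing, and satisfy $i_{0}=0$, hence $i_{j}\geq j$ by induction). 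For any $i\in m^{\ast }$, if $p(i)=j$ then $i\geq i_{j}$, so $p(i)=j\leq i_{j}\leq i$. Applying this with $i=M(x)$ gives $N(u(x))=p(M(x))\leq M(x)$.

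\emph{Main obstacle.} The calculations are all routine once the two order-theoretic lemmas are in hand; the only point needing care is the reduction in Part (2), where one must invoke the implicit finiteness of $X$ and $Y$ to pass from ``$u$ surjective and $\#X=\#Y$'' to ``$u$ bijective'' (so that the preimage collapses to a singleton), and must correctly unpack ``$f$ surjective'' into its two component conditions. It is worth noting that the two parts together sandwich $N(u(x))$ between $M(x)$ and itself exactly when $f$ is bijective (equivalently, $p$ is the identity), recovering the classical equality $M(x)=N(u(x))$ that underlies Hickman's more restrictive definition.
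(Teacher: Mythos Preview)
The paper states this theorem without proof, so there is no argument against which to compare yours directly. Your proof is correct and supplies exactly what the paper omits. The two order-theoretic lemmas you isolate --- that a $p$-injective OP map satisfies $p(i)\geq i$ and a $p$-surjective OP map satisfies $p(i)\leq i$ --- are the natural engine here, and your reductions to them via $N(u(x))=p(V_{x})$ are clean. In Part~(2) you correctly unpack ``$f$ surjective'' as $u$-surjective and $p$-surjective, and your use of $\#X=\#Y$ together with the implicit finiteness of the root sets (the paper writes $X=\{x_{1},\ldots ,x_{k}\}$ in the preliminaries) to force $u$ bijective, hence $V_{x}=M(x)$, is exactly the step the hypothesis is there to enable. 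Your closing remark about the sandwich in the bijective case is also apt and matches the paper's subsequent discussion linking this theorem to Hickman's definition.
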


\begin{corollary}
\label{cor_forHickmanEql}If $f$ is injective then $M\left( x\right) \leq
N\left( u\left( x\right) \right) .$
\end{corollary}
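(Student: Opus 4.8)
The plan is to derive this as an immediate consequence of Theorem \ref{th_forHickmanEqul}$(1)$. By Definition of an injective multiset map, $f=(u,p)$ injective means $f$ is both $u$-injective and $p$-injective; in particular $f$ is $p$-injective, so Theorem \ref{th_forHickmanEqul}$(1)$ applies verbatim and yields $M(x)\leq N(u(x))$ for every $x\in X$. Strictly speaking nothing more is needed, so the ``proof'' is one line; but since a corollary should at least make the mechanism transparent, I would also recall the underlying chain of inequalities.

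For the self-contained version I would unwind $N=f(M)$ at the point $u(x)$. Since $x\in u^{-1}(u(x))$, the set $u^{-1}(u(x))$ is nonempty, hence $N(u(x))=p\!\left(\bigvee_{x'\in u^{-1}(u(x))}M(x')\right)$. Monotonicity of $p$ (property $(op3)$) together with $\bigvee_{x'\in u^{-1}(u(x))}M(x')\geq M(x)$ gives $N(u(x))\geq p(M(x))$. Finally, a $p$-injective OP map is strictly increasing on $m^{\ast}$ with $p(0)=0$, so an easy induction shows $p(i)\geq i$ for all $i\in m^{\ast}$; applying this with $i=M(x)$ gives $N(u(x))\geq p(M(x))\geq M(x)$, as claimed. (Note $u$-injectivity is not actually used here; it is present only because ``injective'' bundles it in.)

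There is essentially no obstacle: the only mildly non-routine point is the observation that an injective order-preserving map $p:m^{\ast}\to n^{\ast}$ satisfies $p(i)\geq i$, and even that is built into the proof of Theorem \ref{th_forHickmanEqul}$(1)$ which we are entitled to cite. So I would keep the argument to the single sentence ``$f$ injective $\Rightarrow$ $f$ is $p$-injective $\Rightarrow$ Theorem \ref{th_forHickmanEqul}$(1)$ applies,'' optionally followed by the displayed chain $M(x)\leq p(M(x))\leq N(u(x))$ for the reader's convenience.
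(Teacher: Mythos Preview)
Your proposal is correct and matches the paper's approach: the corollary is stated without proof, as an immediate consequence of Theorem~\ref{th_forHickmanEqul}(1), and your one-line deduction ``$f$ injective $\Rightarrow$ $f$ is $p$-injective $\Rightarrow$ apply Theorem~\ref{th_forHickmanEqul}(1)'' is exactly the intended reasoning. The additional self-contained unwinding you offer is a harmless bonus not present in the paper.
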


Hickman introduced following notion of mapping for various applications:

\begin{definition}
\label{def_hickman}\cite{hickman80} Let $M,~N$ be multisets. Define a
multiset map $s:M\rightarrow N$ to be a function $Dom\left( M\right)
\rightarrow Dom\left( N\right) .$ We say that $s$ is m-injective if $s$ is
injective and $M\left( k\right) \leq N\left( s\left( k\right) \right) $ for
each $k\in M$ and that $s$ is m-surjective if $s$ is surjective and $M\left(
k\right) \geq N\left( s\left( k\right) \right) $ for each $k\in M.$ We say
that $s$ is m-bijective if $s$ is $m$-injective and $m$-surjective.
\end{definition}

One can easily note that Hickman's $s$ map is Kharal's $u$ map. Then an
injective Kharal map $f$ guarantees both conditions of Hickman's $m$%
-injective mappings. Specifically, $p$-injectivity implies $n\geq m$ (by
Proposition \ref{pr_OPrslts}$\left( 3\right) $) and OP property $p\left(
i\right) \geq p\left( i-1\right) $ assures $M\left( x\right) \leq N\left(
u\left( x\right) \right) ,$ by Corollary \ref{cor_forHickmanEql}. Also a
surjective Kharal mapping guarantees both conditions of Hickman's $m$%
-surjective maps. Surjectivity of $p$ assures $m\geq n$ and by Theorem \ref%
{th_forHickmanEqul}$\left( 2\right) $ we have $M\left( x\right) \geq N\left(
u\left( x\right) \right) .$ It is clear from Hickman's definition that $%
\#Dom\left( M\right) =\#Dom\left( N\right) $ which is also assured by
Theorem \ref{th_forHickmanEqul}$\left( 2\right) $ as $\#X=\#Y.$

Hickman's notion is general enough in the sense that it does not restrict $%
n. $ But the definition is restrictive, first in the sense, that it requires 
$\#Dom\left( M\right) =\#Dom\left( N\right) ,$ secondly, note that Kharal
map affords many other variations as well e.g. $f$ being $u$-injective $p$%
-surjective and $f$ being $u$-surjective $p$-bijective etc.

\subsection{Work of Nazmul \textit{et al}}

If we put $n=m\ $and $p:m^{\ast }\rightarrow n^{\ast }$ is an identity map
and setting $f=u$ i.e. map $f$ to be the same as $u,$ then the definition of
Kharal maps reduces to 
\begin{equation*}
f\left( A\right) \left( y\right) =\left\{ 
\begin{array}{ccc}
\underset{x\in u^{-1}\left( y\right) }{\dbigvee }A\left( x\right) & if & 
u^{-1}\left( y\right) \neq \phi \\ 
&  &  \\ 
0 &  & otherwise%
\end{array}%
\right.
\end{equation*}%
\begin{equation*}
f^{-1}\left( M\right) \left( x\right) =M\left( u\left( x\right) \right)
\end{equation*}%
which is exactly the Nazmul mapping with a slight change of notation as they
use symbol $f$ in the role of $u$ in Kharal mappings. Note that this notion
of mapping restricts the codomain multiset space to be $Y^{m}$ only, though $%
Y$ may be arbitrary.

\subsection{\textbf{Distance and Similarity Measures}}

\begin{definition}
\cite{mst01jen} The cardinality of a multiset $A\in X^{m}$ is $%
\#A=\dsum\limits_{x\in X}A\left( x\right) .$ In the sequel we shall use the
same symbol for cardinality of an ordinary set as well.
\end{definition}

\begin{definition}
A mapping $S:X^{m}\times X^{m}\rightarrow \left[ 0,1\right] $ is said to be
similarity measure if it satisfies following axioms:

$\left( s1\right) ~~0\leq S\left( A,B\right) \leq 1,$

$\left( s2\right) ~~$if $A=B,$ then $S\left( A,B\right) =1,$

$\left( s3\right) ~~S\left( A,B\right) =S\left( B,A\right) ,$

$\left( s4\right) ~~$if $A\subseteq B$ and $B\subseteq C,$ then $S\left(
A,C\right) \leq S\left( A,B\right) $ and $S\left( A,C\right) \leq S\left(
B,C\right) .$
\end{definition}

\begin{definition}
Distance and similarity between two multisets $A,B\in X^{m}$ are defined,
respectively, as:%
\begin{equation*}
d\left( A,B\right) =\sqrt{\dsum\limits_{x\in X}\left( A\left( x\right)
-B\left( x\right) \right) ^{2}}\text{ ~and ~}S\left( A,B\right) =\frac{1}{%
1+d\left( A,B\right) }\text{, where d is a metric on }X^{m}.
\end{equation*}%
It is easy to check that $d:X^{m}\times X^{m}\rightarrow \mathbb{R}^{+}\cup
\left\{ 0\right\} $ and $S:X^{m}\times X^{m}\rightarrow \left[ 0,1\right] ,$
as defined above, are respectively a metric and a similarity measure. It is
also clear that diameter of $X^{m}$ i.e. the maximum distance between any
two members of $X^{m}$ is given as 
\begin{equation*}
dia\left( X^{m}\right) =\sqrt{m}\times \#X
\end{equation*}%
where $\#X$ is the cardinality of the ordinary set $X.$
\end{definition}

Following result shows that Kharal mapping possesses some nice invariance
properties with respect to cardinality, distance and similarity:

\begin{theorem}
Let $f:X^{m}\rightarrow Y^{n}$ be a Kharal mapping and $A,B\in X^{m}$

$\left( 1\right) $If $d,S$ are a metric and similarity, respectively, on $%
X^{m}$ and $f$ is $u$-injective $p$-bijective, then we have 
\begin{eqnarray*}
\left( i\right) ~~d\left( A,B\right) &=&d\left( f\left( A\right) ,f\left(
B\right) \right) \\
\left( ii\right) ~~S\left( A,B\right) &=&S\left( f\left( A\right) ,f\left(
B\right) \right)
\end{eqnarray*}

$\left( 2\right) $ If $m>n$ and $f$ is $p$-surjective then $\#A\geq
\#f\left( A\right) .$

$\left( 3\right) $ If $f$ is injective then $\#A\leq \#f\left( A\right) $
\end{theorem}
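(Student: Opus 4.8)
The plan is to verify each of the three claims by unwinding the definitions of the image map $f(A)$, of cardinality, and of the metric $d$, and then exploiting the special hypotheses on $u$ and $p$. Throughout, the key structural fact is that when $f$ is $u$-injective, every fibre $u^{-1}(y)$ is either empty or a singleton, so the supremum $\bigvee_{x\in u^{-1}(y)}A(x)$ collapses to a single value $A(x)$ with $u(x)=y$; and when $p$ is bijective, Proposition~\ref{pr_OPrslts}$(7)$ forces $m=n$ and $p=\mathrm{id}$, so $f$ acts on counts simply by $f(A)(u(x))=A(x)$ and $f(A)(y)=0$ for $y\notin u(X)$.

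For part $(1)$ I would argue as follows. Since $f$ is $u$-injective and $p$-bijective, the preceding paragraph gives a bijection between $X$ and $u(X)\subseteq Y$ under which $A(x)=f(A)(u(x))$ and $B(x)=f(B)(u(x))$, while on $Y\setminus u(X)$ both $f(A)$ and $f(B)$ vanish. Hence
\begin{equation*}
\sum_{y\in Y}\bigl(f(A)(y)-f(B)(y)\bigr)^2=\sum_{y\in u(X)}\bigl(f(A)(y)-f(B)(y)\bigr)^2=\sum_{x\in X}\bigl(A(x)-B(x)\bigr)^2,
\end{equation*}
which upon taking square roots is $(i)$, and then $(ii)$ follows immediately from the defining formula $S(\cdot,\cdot)=1/(1+d(\cdot,\cdot))$. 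For part $(3)$, if $f$ is injective it is in particular $u$-injective and $p$-injective, so again fibres are singletons or empty, and by Corollary~\ref{cor_forHickmanEql} we have $A(x)\le f(A)(u(x))$ for every $x$; summing over $x\in X$ and using $u$-injectivity to see that distinct $x$ contribute to distinct coordinates $u(x)$ of $f(A)$ gives $\#A=\sum_{x}A(x)\le\sum_{x}f(A)(u(x))\le\sum_{y\in Y}f(A)(y)=\#f(A)$.

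Part $(2)$ is the one requiring the most care, since here $u$ need not be injective and $p$ is only surjective. With $u^{-1}(y)\ne\phi$ one has $f(A)(y)=p\bigl(\bigvee_{x\in u^{-1}(y)}A(x)\bigr)$, and $p$-surjectivity with the OP axioms forces $m\ge n$; the hypothesis $m>n$ then makes $p$ genuinely order-collapsing. The estimate I want is $f(A)(y)\le\sum_{x\in u^{-1}(y)}A(x)$ for each $y\in u(X)$: indeed $\bigvee_{x\in u^{-1}(y)}A(x)\le\sum_{x\in u^{-1}(y)}A(x)$, and since $p$ is OP with $p(k)\le k$ for all $k\in m^{*}$ (because $p(0)=0$, $p$ is nondecreasing, and it lands in $n^{*}$ with $n<m$ — this monotone-comparison is the point to check carefully), we get $p\bigl(\bigvee_{x\in u^{-1}(y)}A(x)\bigr)\le\bigvee_{x\in u^{-1}(y)}A(x)\le\sum_{x\in u^{-1}(y)}A(x)$. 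Summing over $y\in u(X)$, and noting the fibres $u^{-1}(y)$ partition $X$, yields $\#f(A)=\sum_{y\in u(X)}f(A)(y)\le\sum_{y\in u(X)}\sum_{x\in u^{-1}(y)}A(x)=\sum_{x\in X}A(x)=\#A$. The main obstacle is precisely justifying $p(k)\le k$ under the stated hypotheses — one must confirm that surjectivity of the OP map $p:m^{*}\to n^{*}$ together with $m>n$ indeed forces $p$ to never exceed the identity (intuitively $p$ must ``waste'' at least $m-n$ steps, so it can only lag behind), rather than this following merely from $p(m)=n<m$; a short induction on $k$ using $(op3)$ and the fact that $p$ omits no value in $n^{*}$ settles it.
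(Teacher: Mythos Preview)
The paper states this theorem without proof, so there is no argument of the authors to compare against; your proposal is therefore being assessed on its own merits.

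Your arguments for parts $(1)$ and $(3)$ are correct and clean: $p$-bijectivity gives $p=\mathrm{id}$ by Proposition~\ref{pr_OPrslts}$(7)$, $u$-injectivity makes each nonempty fibre a singleton, and the metric and cardinality computations then follow by straightforward reindexing. The appeal to Corollary~\ref{cor_forHickmanEql} in part $(3)$ is exactly the right tool.

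For part $(2)$ your overall strategy is sound, but be careful with the justification of $p(k)\le k$. Your first parenthetical reason --- that $p(0)=0$, $p$ is nondecreasing, and $p$ lands in $n^{*}$ with $n<m$ --- is \emph{not} sufficient: for $m=3$, $n=2$ the map $p(0)=0$, $p(1)=p(2)=p(3)=2$ is OP and lands in $2^{*}$ yet has $p(1)=2>1$. Surjectivity is essential, as you rightly note at the end. The clean argument is: if $k_{0}$ were minimal with $p(k_{0})>k_{0}$, then $p(k_{0}-1)\le k_{0}-1$ while $p(k_{0})\ge k_{0}+1$, so monotonicity forces $p$ to skip the value $k_{0}$; but $k_{0}<p(k_{0})\le n$ means $k_{0}\in n^{*}$, contradicting surjectivity. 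With $p(k)\le k$ in hand, your chain $f(A)(y)=p\bigl(\bigvee_{x\in u^{-1}(y)}A(x)\bigr)\le\bigvee_{x\in u^{-1}(y)}A(x)\le\sum_{x\in u^{-1}(y)}A(x)$ and the fibre-partition summation are correct. (Incidentally, the same argument shows the hypothesis $m>n$ is not actually needed: $p$-surjectivity alone gives $p(k)\le k$ and hence $\#f(A)\le\#A$.)
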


\section{\protect\bigskip \textbf{Conclusion\label{sec_finalRems}}}

This paper has addressed the problem of defining a suitable notion of
mappings on multiset spaces. Main contribution of the paper is twofold: It
first defines a mapping on multiset spaces and presents several of its
properties and counter examples. Secondly, the new mapping has been shown to
possess many nice properties in relation to pattern recognition measures of
multisets like cardinality, distance and similarity. This mapping is further
shown to encompass Parikh and Khomenko mappings through suitable
representation schemes and Nazmul and Hickman mappings as its special cases.
The mapping rewrites multisets and thus enables one to model paradigms like $%
P$ systems, Petri Nets, Abstract Rewriting on Multisets (ARMS) and Abstract
Chemical Machines. The paper also gives several fundamental results. By
defining the notion of constant multisets, it shows that set theoretic forms
of Law of Excluded Middle and Law of Non-Contradiction do not hold for
multisets. This is the motivation to introduce $m$-coincidence to handle
disjoint multisets.\bigskip

\begin{acknowledgement}
The authors at King Saud University, extend their appreciation to the
Deanship of Scientific Research at King Saud University for funding this
work through research group no. RG-1441-439 \bigskip
\end{acknowledgement}

\end{document}